\newtheorem{theorem}{Theorem}
\theoremstyle{plain}
\newtheorem{corollary}{Corollary}
\newtheorem{example}{Example}
\numberwithin{equation}{section}
 \numberwithin{equation}{section}
\begin{document}

\title[Complementary inequalities to Davis-Choi-Jensen's inequality]
{Complementary inequalities to Davis-Choi-Jensen's inequality and operator power means}

\author[A. G. Ghazanfari]{A. G. Ghazanfari}

\address{Department of Mathematics, Lorestan University, P. O. Box 465, Khoramabad, Iran.}

\email{, ghazanfari.a@lu.ac.ir}

\setcounter{page}{1}

\date{}
\subjclass[2010]{47A63, 47A64, 47B65}
\keywords{Karcher mean; Geometric mean; Positive linear map}

 \begin{abstract}
 Let $f$ be an operator convex function on $(0,\infty)$, and $\Phi$ be a unital positive linear maps on $B(H)$.
 we give a complementary inequality to
 Davis-Choi-Jensen's inequality as follows
\begin{equation*}
f(\Phi(A))\geq \frac{4R(A,B)}{(1+R(A,B))^2}\Phi(f(A)),
\end{equation*}
where $R(A, B)=\max\{r(A^{-1}B) ,r(B^{-1}A)\}$ and $r(A)$ is the spectral radius of $A$.
We investigate the complementary inequalities related to the operator power means and the Karcher means via unital positive linear maps,
 and obtain the following result:
If $A_{1}$, $A_{2}$,\dots, $A_{n}$, are positive definite operators in $B(H)$, and $0<m_i\leq A_i\leq M_i$, then
\begin{equation*}
\Lambda( \omega;\Phi(\mathbb{A}))\geq\Phi(\Lambda( \omega; \mathbb{A}))\geq \frac{4\hbar}{(1+\hbar)^2}~\Lambda( \omega;\Phi(\mathbb{A})),
\end{equation*}
where $\hbar= \max\limits_{1\leq i\leq n} \frac{M_i}{m_i}$.
Finally, we prove that if $G(A_1,\dots,A_n)$ is the generalized geometric mean defined by Ando-Li-Mathias for $n$ positive definite operators, then
 \begin{align*}
\Phi(G(A_1,\dots,A_n))\geq\left(\frac{2h^\frac{1}{2}}{1+h}\right)^{n-1}G(\Phi(A_1),\dots,\Phi(A_n)),
\end{align*}
where $h=\max\limits_{1\leq i,j\leq n} R(A_i, A_j)$.
\end{abstract}

\maketitle

\section{\bf Introduction}\vskip 2mm

Let $B(H)$ denote the set of all bounded linear operators on a complex Hilbert
space $H$.
An operator $A \in B(H)$ is positive definite (resp. positive semi-definite)
if $\langle Ax, x\rangle > 0$ (resp. $\langle Ax, x\rangle \geq 0 )$ holds for all non‐zero $x \in H$ . If $A$ is
positive semi‐definite, we denote $A\geq 0$. Let $\mathcal{P}\mathcal{S}, \mathcal{P}\subset B(H)$ be the sets of all positive
semi-definite operators and positive definite operators, respectively.
To reach inequalities for bounded self-adjoint operators on Hilbert space, we shall use
the following monotonicity property for operator functions:\\
If $X\in B(H)$ is self adjoint with a spectrum $Sp(X)$, and $f,g$  are continuous real valued functions
on an interval containing $Sp(X)$, then
\begin{equation}\label{1.1}
f(t)\geq g(t),~t\in Sp(X)\Rightarrow ~f(X)\geq g(X).
\end{equation}
For more details about this property, the reader is referred to \cite{pec}.

For $A,B\in\mathcal{P}$ the geometric mean $A\sharp B$ of $A$ and $B$ is defined by
$A\sharp B =A^{\frac{1}{2}}(A^{\frac{-1}{2}}BA^{\frac{-1}{2}})^{\frac{1}{2}}A^{\frac{1}{2}}.$

For $A,B\in\mathcal{P}$, let
\[
R(A, B)=\max\{r(A^{-1}B) ,r(B^{-1}A)\}
\]
where $r(A)$ means the spectral radius of $A$ and we have
$$r(B^{-1}A)= \inf\{ \lambda >0 : A\leq \lambda B \}=\|B^{\frac{-1}{2}}AB^{\frac{-1}{2}}\|.$$
$R(A, B)$ was defined in \cite{and}, and many nice properties
of $R(A,B)$ were shown as follows: If $A,B, C\in\mathcal{P}$, then

\begin{align*}
&(i)~ R(A,C) \leq R(A,B)R(B,C) \text{ (triangle inequality) }\\
&(ii)~ R(A,B) \geq 1,\text{ and }R(A,B) = 1 \text{ iff } A = B\\
&(iii)~\|A-B\|\leq (R(A,B)-1)\|A\|.
\end{align*}

The Thompson distance $d(A, B)$
on the convex cone $\Omega$ of positive definite operators is defined by
$$d(A,B )  = \log  R(A, B)=\max\{\log r(A^{-1}B) , \log r(B^{-1}A) \},$$
see \cite{and, cor, nus}. we know that $\Omega$
is a complete metric space with respect to this metric and the corresponding metric topology
on $\Omega$ agree with the relative norm topology.

As a basic inequality with respect to the metric, the following inequality for a weighted geometric mean of two operators hold \cite{and, cor}:
$$ d(A_{1} \sharp_{\nu} A_{2},B_{1}\sharp_{\nu} B_{2})\leq (1-\nu)d(A_{1},B_{1})+\nu d(A_{2},B_{2})$$
         for $A_{1},A_{2},B_{1},B_{2} \in \Omega$ and $\nu \in (0,1)$.

Let $A$ and $B$ be two positive definite operators on a Hilbert space $H$ and $\Phi$ be a unital positive linear map on $B(H)$.
Ando \cite[Theorem 3]{ando1} showed the
following property of a positive linear map in
connection with the operator geometric mean.
\begin{equation}\label{2.1}
\Phi(A\sharp B)\leq \Phi(A)\sharp \Phi(B).
\end{equation}
Inequality \eqref{2.1} is extended to an operator mean $\sigma$ in Kubo-Ando theory as follows:
\begin{equation*}
\Phi(A\sigma B)\leq \Phi(A)\sigma \Phi(B),
\end{equation*}
In particular for the weighted geometric mean, we have
\begin{equation}\label{2.2}
\Phi(A\sharp_\nu B)\leq \Phi(A)\sharp_\nu \Phi(B),
\end{equation}
where $\nu$ is a real number in $(0,1]$.
A complementary inequality to inequality \eqref{2.2}, is the following important inequality\cite{mic}:\\
Let $0<m_1 I\leq A\leq M_1 I$ and $0<m_2 I\leq B\leq M_2 I$
and $0<\nu\leq 1$, then
\begin{equation}\label{2.3}
K(h,\nu)\Phi(A)\sharp_\nu \Phi(B)\leq \Phi(A\sharp_\nu B),
\end{equation}
where $h=\frac{M_1M_2}{m_1m_2}$ and $K(h,\nu)$ is the generalization Kantorovich constant defined by
\[
K(h,\nu)=\frac{h^\nu-h}{(\nu-1)(h-1)}\left(\frac{(\nu-1)(h^\nu-1)}{\nu(h^\nu-h)}\right)^\nu.
\]

The special case, $K(h,2)=K(h,-1)=\frac{(1+h)^2}{4h}$ is called the Kantorovich constant.
The generalization Kantorovich constant $K(h,\nu)$ has the following properties:
\begin{align*}
&K(h,\nu)=K(h,1-\nu)\\
0&<K(h,\nu)\leq 1 \text{ for } 0<\nu\leq 1,
\end{align*}
and $K(h,\nu)$ is decreasing for $\nu\leq \frac{1}{2}$ and increasing for $\nu> \frac{1}{2}$ therefore for all
$\nu\in \mathbb{R}$, $K(h)=K(h,\frac{1}{2})=\frac{2h^\frac{1}{4}}{1+h^\frac{1}{2}}\leq K(h,\nu)$.

For some fundamental results on complementary inequalities to famed inequalities, we would like to refer
the readers to \cite{mic, pec}. Afterwards, several complementary inequalities to
celebrated inequalities was discussed by many mathematicians.
For more information and some recent results on this topic; see \cite{and, fuj, tom, yam, yam1}.

\section{A complementary inequality to weighted geometric mean}\vskip 2mm

First, we state another complementary inequality to \eqref{2.2} with respect to $R(A,B)$ as follows:

\begin{theorem}\label{t1}
Let $A$ and $B$ be two positive definite operators in $\mathcal{P}$, then
\begin{equation}\label{2.4}
K\left(R(A,B)^2,\nu\right)\Phi(A)\sharp_\nu\Phi(B)\leq \Phi(A\sharp_\nu B).
\end{equation}
\end{theorem}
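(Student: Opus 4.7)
\textbf{Proof proposal for Theorem \ref{t1}.}

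Set $\alpha:=R(A,B)$. Since $r(B^{-1}A)=\inf\{\lambda>0:A\le\lambda B\}\le\alpha$ and symmetrically $r(A^{-1}B)\le\alpha$, we obtain the two-sided operator bound $\alpha^{-1}A\le B\le \alpha A$. Conjugating by $A^{-1/2}$ shows that $S:=A^{-1/2}BA^{-1/2}$ has spectrum in $[\alpha^{-1},\alpha]$, so its spectral ratio is exactly $\alpha^2=R(A,B)^2$. This choice of $S$ with this sharp ratio is what will drive the improvement over \eqref{2.3}.

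The plan is to recast $\Phi(A\sharp_\nu B)$ and $\Phi(A)\sharp_\nu\Phi(B)$ through a single auxiliary unital positive linear map. Define
\[
\Psi(X):=\Phi(A)^{-1/2}\,\Phi\!\left(A^{1/2}XA^{1/2}\right)\Phi(A)^{-1/2},\qquad X\in B(H).
\]
Linearity and positivity are immediate, and $\Psi(I)=\Phi(A)^{-1/2}\Phi(A)\Phi(A)^{-1/2}=I$, so $\Psi$ is unital. A direct computation gives
\[
\Psi(S)=\Phi(A)^{-1/2}\Phi(B)\Phi(A)^{-1/2},\qquad
\Psi(S^{\nu})=\Phi(A)^{-1/2}\Phi(A\sharp_{\nu}B)\Phi(A)^{-1/2}.
\]

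Now I would invoke the Kantorovich--Mond--Pe\v{c}ari\'c reverse of Davis--Choi--Jensen's inequality for the operator concave function $t\mapsto t^{\nu}$ on $[\alpha^{-1},\alpha]$, namely: for any unital positive linear map $\Psi$ and any positive operator $S$ with $mI\le S\le MI$ and $\nu\in(0,1]$,
\[
\Psi(S^{\nu})\;\ge\;K\!\left(\tfrac{M}{m},\nu\right)\Psi(S)^{\nu}.
\]
Applied with $m=\alpha^{-1}$, $M=\alpha$ (so $M/m=R(A,B)^2$), and the identifications above, this reads
\[
\Phi(A)^{-1/2}\Phi(A\sharp_{\nu}B)\Phi(A)^{-1/2}\;\ge\;K\!\left(R(A,B)^2,\nu\right)\Bigl(\Phi(A)^{-1/2}\Phi(B)\Phi(A)^{-1/2}\Bigr)^{\nu}.
\]
Multiplying both sides by $\Phi(A)^{1/2}$ on the left and right and recognising the resulting right-hand side as $K(R(A,B)^2,\nu)\,\Phi(A)\sharp_{\nu}\Phi(B)$ yields \eqref{2.4}.

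The only real obstacle is conceptual rather than computational: one must spot that sandwiching with $A^{\pm 1/2}$ inside $\Phi$ produces a genuinely unital positive linear map, and that the sharp spectral interval for $A^{-1/2}BA^{-1/2}$ is $[R(A,B)^{-1},R(A,B)]$. Once these two observations are in place, the desired bound is precisely the standard Mond--Pe\v{c}ari\'c reverse Jensen inequality for $t^{\nu}$, with the generalized Kantorovich constant $K(h,\nu)$ providing exactly the asserted factor.
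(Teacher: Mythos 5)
Your proposal is correct and follows essentially the same route as the paper: the same sandwich bound $R(A,B)^{-1}A\le B\le R(A,B)A$, the same auxiliary unital map $\Psi(X)=\Phi(A)^{-1/2}\Phi(A^{1/2}XA^{1/2})\Phi(A)^{-1/2}$, and the same reduction to the Mond--Pe\v{c}ari\'c reverse for $t^\nu$ on $[R(A,B)^{-1},R(A,B)]$. The only cosmetic difference is that the paper phrases the key step as inequality \eqref{2.3} applied to the pair $(I,A^{-1/2}BA^{-1/2})$, which is the same statement as the reverse Jensen inequality you invoke.
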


\begin{proof}
we know that
\begin{equation}\label{2.5}
\dfrac{1}{R(A,B)}A\leq B \leq R(A,B)A.
\end{equation}
Define a linear map $\Psi$ on $H$ by	

$$ \Psi(X)=\Phi(A)^{-\frac{1}{2}}\Phi(A^{\frac{1}{2}}XA^{\frac{1}{2}})\Phi(A)^{-\frac{1}{2}}.$$

Then $\Psi$ is a unital positive linear map. From \eqref{2.5}
\begin{align}\label{2.6}
m=\dfrac{1}{R(A,B)}\leq A^{-\frac{1}{2}}BA^{-\frac{1}{2}}\leq R(A,B)=M.
\end{align}
Using \eqref{2.3} for $\Psi $, and \eqref{2.6}, we get
\begin{equation}\label{2.8}
K(h,\nu)\Psi(I)\sharp_\nu\Psi(X)\leq\Psi(I\sharp_\nu X)
\end{equation}
where $X=A^{-\frac{1}{2}}BA^{-\frac{1}{2}}$ and $h=\dfrac{_{M}}{m}=R^{2}(A,B)$.

From \eqref{2.8}, we have
$$K(h,\nu)\Psi(X)^\nu\leq\Psi(X^\nu)$$ or
\begin{align*}
&K(h,\nu)\Big(\Phi(A)^{-\frac{1}{2}}\Phi(B)\Phi(A)^{-\frac{1}{2}}\Big)^\nu\leq
\Phi(A)^{-\frac{1}{2}}\Phi\Big(A^{\frac{1}{2}}(A^{-\frac{1}{2}}BA^{-\frac{1}{2}})^\nu A^{\frac{1}{2}}\Big)\Phi(A)^{-\frac{1}{2}}\\
&K(h,\nu)\Phi(A)^{\frac{1}{2}}\Big(\Phi(A)^{-\frac{1}{2}}\Phi(B)\Phi(A)^{-\frac{1}{2}}\Big)^\nu\Phi(A)^{\frac{1}{2}}\leq
\Phi\Big(A^{\frac{1}{2}}(A^{-\frac{1}{2}}BA^{-\frac{1}{2}})^\nu A^{\frac{1}{2}}\Big).
\end{align*}
Therefore, we obtain the desired inequality \eqref{2.4}.
\end{proof}
The inequality \eqref{2.3} with $\nu=\frac{1}{2}$, becomes the following inequality.

\begin{equation}\label{2.9}
\frac{2R(A,B)^\frac{1}{2}}{1+R(A,B)}\Phi(A)\sharp\Phi(B)\leq \Phi(A\sharp B).
\end{equation}

Next, we recall the Kadison's Schwarz inequalities

\begin{equation}\label{2.9.1}
\Phi(A^2)\geq \Phi(A)^2,~~\Phi(A^{-1})\geq \Phi(A)^{-1}
\end{equation}
and two complementary inequalities to them, whenever $0<m\leq A\leq M$:
\begin{align}\label{2.9.2}
\frac{(m+M)^2}{4mM}\Phi(A)^2\geq \Phi(A^2),~~\frac{(m+M)^2}{4mM}\Phi(A)^{-1}\geq \Phi(A^{-1}).
\end{align}
The following inequality unifies Kadison's Schwarz inequalities into a single form.
\begin{equation}\label{2.10}
\Phi(BA^{-1}B)\geq\Phi(B)\Phi(A)^{-1}\Phi(B).
\end{equation}
If $0<m\leq A, B\leq M$, then the following inequality is a complementary inequality to \eqref{2.10}
\begin{equation}\label{2.11}
\frac{(m+M)^2}{4mM}\Phi(B)\Phi(A)^{-1}\Phi(B)\geq \Phi(BA^{-1}B).
\end{equation}
The similar to the proof of Theorem\ref{t1}, we give another complementary inequality to \eqref{2.10} with respect to $R(A,B)$ as follows:

\begin{equation}\label{2.12}
\frac{(1+R(A,B)^2)^2}{4R(A,B)}\Phi(B)\Phi(A)^{-1}\Phi(B)\geq \Phi(BA^{-1}B).
\end{equation}

To compare the inequalities \eqref{2.3} with \eqref{2.9} and \eqref{2.11} with \eqref{2.12}, the following examples
show that neither Kantorovich constants in \eqref{2.3} and \eqref{2.11} nor Kantorovich constants in \eqref{2.9} and \eqref{2.12} are uniformly
better than the other.

\begin{example}
Let $A=
\begin{bmatrix}
  2 & 0 \\
  0 & \frac{1}{3} \\
\end{bmatrix}$
and $B=
\begin{bmatrix}
  4 & 0 \\
  0 & \frac{1}{2} \\
\end{bmatrix}$.
Clearly, $m_1I=\frac{1}{3}I\leq A\leq 2I=M_1I$ and $m_2I=\frac{1}{2}I\leq B\leq 4I=M_2I$. Then
$A^{-1}B=
\begin{bmatrix}
  2 & 0 \\
  0 & \frac{3}{2} \\
\end{bmatrix}$
and
$B^{-1}A=
\begin{bmatrix}
  \frac{1}{2} & 0 \\
  0 & \frac{2}{3} \\
\end{bmatrix}$,
therefore $R(A,B)^2=4\leq h=\frac{M_1M_2}{m_1m_2}=48$.
Consequently, $K(R(A,B)^2, \frac{1}{2})\geq K(h, \frac{1}{2})$ and $K(R(A,B)^2, 2)\leq K(h, 2)$.

\end{example}

\begin{example}
Let $C=
\begin{bmatrix}
  2 & 1 \\
  1 & 1 \\
\end{bmatrix}$
and $D=
\begin{bmatrix}
  1 & 0 \\
  0 & 2 \\
\end{bmatrix}$.
Clearly, $m_1I=\frac{3-\sqrt{5}}{2}I\leq C\leq \frac{3+\sqrt{5}}{2}I=M_1I$ and $m_2I=I\leq D\leq 2I=M_2I$. Then
$C^{-1}D=
\begin{bmatrix}
  1 & -2 \\
  -1 & 4 \\
\end{bmatrix}$
and
$D^{-1}C=
\begin{bmatrix}
  2 & 1 \\
  \frac{1}{2} & \frac{1}{2} \\
\end{bmatrix}$,
therefore $R(C,D)^2=\left(\frac{5+\sqrt{17}}{2}\right)^2\geq h=\frac{M_1M_2}{m_1m_2}=\frac{(3+\sqrt{5})^2}{2}$.
Consequently, $K(R(C,D)^2, \frac{1}{2})\leq K(h, \frac{1}{2})$ and $K(R(C,D)^2, 2)\geq K(h, 2)$.

\end{example}

\begin{theorem}\label{t2}
If $f$ is an operator convex function on $(0,\infty)$ and $\Phi$ is a unital positive linear map on $B(H)$. Then
\begin{align}\label{2.13}
K(h,2)f(\Phi(A))\geq \Phi(f(A))\geq f(\Phi(A))
\end{align}
for every positive definite operators $A$, where $h=R(A,B)^2$, or $h=\frac{M}{n}$ whenever $0<m\leq A\leq M$.
\end{theorem}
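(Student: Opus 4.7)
The right-hand inequality $\Phi(f(A)) \geq f(\Phi(A))$ is the Davis--Choi--Jensen inequality, which holds for any operator convex $f$ on $(0,\infty)$ and any unital positive linear map $\Phi$; I would invoke it directly. For the complementary bound on the left I plan a Mond--Pe\v{c}ari\'{c}-style argument that combines scalar convexity of $f$ with a Kantorovich-type scalar estimate, in a two-step scheme whose pattern mimics the proof of Theorem~\ref{t1}.

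First, from ordinary convexity of $f$ on $[m,M]$, the secant-line bound $f(t) \leq \alpha t + \beta$ holds for $t \in [m,M]$, where $\alpha = (f(M)-f(m))/(M-m)$ and $\beta = (Mf(m)-mf(M))/(M-m)$. Since the spectrum of $A$ lies in $[m,M]$, the monotonicity principle \eqref{1.1} lifts this to $f(A) \leq \alpha A + \beta I$, and applying $\Phi$ preserves the inequality to yield
\begin{equation*}
\Phi(f(A)) \leq \alpha\,\Phi(A) + \beta I.
\end{equation*}

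Second, since $\Phi$ is unital we have $mI \leq \Phi(A) \leq MI$, so by functional calculus it suffices to establish the scalar inequality $\alpha t + \beta \leq K(h,2)\,f(t)$ on $[m,M]$ with $h = M/m$. For the prototype operator convex functions $f(t)=t^2$ and $f(t)=t^{-1}$ a one-variable calculus computation shows that the ratio $(\alpha t + \beta)/f(t)$ attains its maximum at the harmonic mean $t = 2Mm/(M+m)$ with value precisely $K(h,2) = (M+m)^2/(4Mm)$. To extend this scalar bound to an arbitrary operator convex $f$ on $(0,\infty)$, I would appeal to the Kraus--L\"owner integral representation, which writes $f$ as a non-negative superposition of the constant $1$, the linear term $t$, and the atomic operator convex pieces $t^2/(t+s)$ for $s\geq 0$; the Kantorovich-type scalar inequality passes through non-negative superpositions once verified on each atom.

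Chaining the two displays and applying functional calculus at $\Phi(A)$ yields $\Phi(f(A)) \leq K(h,2)\,f(\Phi(A))$. The alternative form $h = R(A,B)^2$ follows from \eqref{2.5}, which furnishes a two-sided pinching of ratio $R(A,B)^2$ and thus replaces $M/m$ as the effective Kantorovich parameter, in direct parallel with the argument used for Theorem~\ref{t1}. The main obstacle is the uniform scalar bound of Step~2 for \emph{every} operator convex $f$: transparent on the two prototypes $t^2$ and $t^{-1}$, where the Kantorovich constant appears sharply, its validity in full generality hinges on the integral representation and on a positivity check for the weighted secant-line deficit on each atomic building block.
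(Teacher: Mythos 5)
Your route is genuinely different from the paper's. The paper never forms a secant line: it expands $f$ via the integral representation \eqref{2.14} and bounds each piece $\lambda A(\lambda+A)^{-1}A$ directly at the operator level, using \eqref{2.10} for the lower bound (so it reproves Davis--Choi--Jensen rather than citing it) and the complementary inequality \eqref{2.11}/\eqref{2.12} together with $\Phi(A^2)\le K(h,2)\Phi(A)^2$ for the upper bound. You instead run the classical Mond--Pe\v{c}ari\'{c} two-step scheme (chordal majorization of $f(A)$, then a scalar ratio optimization) and push the ratio bound through the representation atom by atom. The scalar verification you flag as the main obstacle does in fact go through: for $g_s(t)=t^2/(t+s)$ with chord $L_s(t)=at+b$ on $[m,M]$, the inequality $L_s(t)\le K(h,2)\,g_s(t)$ reduces, after multiplying by $t+s$, to the nonnegativity of the convex quadratic $(K-a)t^2-(b+as)t-bs$, whose discriminant satisfies $4(K-a)(-bs)-(b+as)^2=s^2(M-m)^2\bigl[Mm+s(M+m)\bigr]/\bigl[(M+s)^2(m+s)^2\bigr]\ge 0$; so the atoms behave exactly as you hoped. (Minor slip: for $f(t)=t^{-1}$ the ratio is maximized at the arithmetic mean $(M+m)/2$, not the harmonic mean, though the extremal value $(M+m)^2/(4Mm)$ is correct.) Your approach isolates cleanly where the Kantorovich constant comes from; the paper's stays entirely inside the operator order.

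There is, however, a genuine gap, and it is one your argument shares with the paper's: in \eqref{2.14} the coefficients $\alpha$ and $\beta$ of the constant and linear atoms are arbitrary reals, so $f$ is \emph{not} a ``non-negative superposition'' of the atoms as you assert. For an atom carried with a negative coefficient the needed inequality $c\,g\le K(h,2)\,c\,g$ reverses, and both the termwise argument and your superposition argument collapse. This is not a removable technicality: for $f(t)=(t-1)^2$, $\Phi(X)=\tfrac12\mathrm{tr}(X)$ on $2\times 2$ matrices and $A=\mathrm{diag}(3/2,1/2)$ one gets $f(\Phi(A))=0$ while $\Phi(f(A))=1/4$, so the left inequality in \eqref{2.13} fails as stated. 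An additional hypothesis (e.g.\ $f\ge 0$ on the relevant interval, or $\alpha,\beta\ge 0$ in \eqref{2.14}) is required; under such a hypothesis your plan closes.
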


\begin{proof}
It is known that every operator convex function
$f$ on $(0,\infty)$ has a special integral representation as follows:
\begin{align}\label{2.14}
f(t)=\alpha+\beta t+\gamma t^2+\int_0^\infty\frac{\lambda t^2}{\lambda+t}d\mu(\lambda),
\end{align}
where $\alpha, \beta$ are real numbers, $\gamma \geq 0$, and $\mu$ is
a positive finite measure. Thus
\begin{align*}
\Phi(f(A))=\alpha1_K+\beta\Phi(A)+\gamma \Phi(A^2)+\int_0^\infty\Phi(\lambda A^2(\lambda+A)^{-1})d\mu(\lambda),
\end{align*}
and
\begin{align*}
f(\Phi(A))=\alpha1_K+\beta\Phi(A)+\gamma \Phi(A)^2+\int_0^\infty\lambda \Phi(A)^2(\lambda+\Phi(A))^{-1}d\mu(\lambda).
\end{align*}
By \eqref{2.10}, we have

\begin{align*}
\Phi(\lambda A^2(\lambda+A)^{-1})&=\lambda\Phi( A^2(\lambda+A)^{-1})=\lambda\Phi( A(\lambda+A)^{-1}A)\\
&\geq \lambda\Phi(A)(\Phi(\lambda+A))^{-1}\Phi(A)=\lambda\Phi(A)(\lambda+\Phi(A))^{-1}\Phi(A)\\
&=\lambda \Phi(A)^2(\lambda+\Phi(A))^{-1}.
\end{align*}
Therefore $\Phi(f(A))\geq f(\Phi(A))$, since $\Phi(A^2)\geq\Phi(A)^2$.

On the other hand, from \eqref{2.11} or \eqref{2.12}, we get
\begin{align*}
&\Phi(\lambda A^2(\lambda+A)^{-1})=\lambda\Phi( A^2(\lambda+A)^{-1})=\lambda\Phi( A(\lambda+A)^{-1}A)\\
&\leq K(h,2) \lambda\Phi(A)(\Phi(\lambda+A))^{-1}\Phi(A)=K(h,2)\lambda\Phi(A)(\lambda+\Phi(A))^{-1}\Phi(A)\\
&=K(h,2)\lambda \Phi(A)^2(\lambda+\Phi(A))^{-1}.
\end{align*}
Therefore $\Phi(f(A))\leq K(h,2) f(\Phi(A))$, since $\Phi(A^2)\leq K(h,2)\Phi(A)^2$.
\end{proof}

\section{The power means and the Karcher means}

The Karcher mean, also called the Riemannian mean, has long
been of interest in the field of differential geometry. Recently it has been used in a diverse variety of settings:
 diffusion tensors in medical imaging and radar, covariance matrices in statistics, kernels in machine learning and
elasticity. Power means for positive definite matrices and operators have been introduced in \cite{law2, lim1}.
It is shown in \cite{bha, law3} that the Karcher mean and power means satisfy all ten properties stated in \cite{ando}.
The Karcher mean and power means have recently become an important tool
for studying of positive definite operators and an interesting subject for matrix analysts and operator theorists.
We would like to refer the reader to \cite{law3,law2,lim1,lim2,moa,pal,yam1} and references therein for more information.

Geometric and power means of two operators
can be extended over more than $3$‐operators via the solution of operator
equations as follows. Let $n$ be a natural number, and let $\triangle_{n}$ be a set of all probability
vectors, i.e.,
\begin{equation*}
\triangle_{n}=\{ \omega= (w_{1}, \dots, w_{n})\in(0, 1)^{n}| \sum_{i=1}^{n}w_{i}=1\}.
\end{equation*}

Let $\mathbb{A}= (A_{1},\dots, A_{n}) \in \mathcal{P}^{n}$ and $\omega=(\mathrm{w}_{1},\dots, w_{n}) \in \triangle_{n} $.
Then the weighted Karcher mean $\Lambda( \omega; \mathbb{A})$ is defined by a unique
positive solution of the following operator equation;

\begin{equation}\label{3.1}
\sum_{i=1}^{n}\mathrm{w}_{i}\log(X^\frac{-1}{2}A_{i}X^\frac{-1}{2})=0.
\end{equation}
The weighted power mean $P_t( \omega; \mathbb{A})$ is defined by a unique
positive solution of the following operator equation;

\begin{equation*}
I=\sum_{i=1}^{n}\mathrm{w}_{i}(X^\frac{-1}{2}A_{i}X^\frac{-1}{2})^t \text{ for } t\in (0,1],
\end{equation*}
or equivalently
\begin{equation}\label{3.2}
X=\sum_{i=1}^{n}\mathrm{w}_{i}(X\sharp_t A_i) \text{ for } t\in (0,1].
\end{equation}

For $t\in [-1,0)$, it is defined by
\[
P_t( \omega; \mathbb{A})=(P_{-t}( \omega; \mathbb{A}^{-1}))^{-1},
\]
where $\mathbb{A}^{-1}= (A_{1}^{-1},\dots, A_{n}^{-1})$.

Lawson and Lim in \cite[Corollary 6.7]{law2} gave an important connection between the weighted Karcher means and
the weighted power means in strong operator topology, as follows:
\begin{equation}\label{3.3}
\Lambda(\omega, \mathbb{A})=\lim_{t\rightarrow 0}P_t(\omega, \mathbb{A}).
\end{equation}
Moreover, they also showed that the following property holds for these means:
\begin{equation}\label{3.4}
P_{-t}( \omega; \mathbb{A})\leq \Lambda( \omega; \mathbb{A})\leq P_{t}( \omega; \mathbb{A})\text{ for all }t\in(0,1].
\end{equation}

Let $\Phi$ be a unital positive linear map on $B(H)$. In \cite{lim1} is proved that if $t\in (0,1]$,
then
\begin{equation}\label{3.5}
\Phi(P_t( \omega; \mathbb{A}))\leq P_t( \omega; \Phi(\mathbb{A})),
\end{equation}
where $\Phi(\mathbb{A})=(\Phi(A_{1}),\dots, \Phi(A_{n}))$.

In the following Theorem, we give a reverse inequality to the inequality \eqref{3.5}.

\begin{theorem}\label{t3}
Let $\Phi$ be a unital positive linear map on $B(H)$ and $n\geq2$ be a positive integer. If $0< t\leq 1$ and $(A_1, \dots , A_n)\in \mathcal{P}^n$.
Then
\begin{equation}\label{3.6}
\Phi(P_t( \omega; \mathbb{A}))\geq K\left(h_0, \frac{1}{2}\right)^\frac{1}{t}P_t( \omega;\Phi(\mathbb{A})),
\end{equation}
where $h_0= \max\limits_{1\leq i,j\leq n} R^2(A_i,A_j)$.
\end{theorem}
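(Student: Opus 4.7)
The plan is to start from the fixed-point characterization of $P_t(\omega;\mathbb{A})$ in \eqref{3.2}, apply Theorem \ref{t1} term by term after pushing through $\Phi$, and then convert the resulting estimate into one that dominates $P_t(\omega;\Phi(\mathbb{A}))$ via a homogeneity/rescaling trick.

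Concretely, I would put $X:=P_t(\omega;\mathbb{A})$ and apply $\Phi$ to the defining equation to obtain $\Phi(X)=\sum_i w_i\,\Phi(X\sharp_t A_i)$. Before invoking Theorem \ref{t1}, I need a uniform bound on $R(X,A_j)^2$. Fixing $j$ and setting $r_j^\ast:=\max_i R(A_i,A_j)$, the inequalities \eqref{2.5} give $(r_j^\ast)^{-1}A_j\leq A_i\leq r_j^\ast A_j$ for every $i$; monotonicity of the power mean together with its normalization $P_t(\omega;(cA_j,\dots,cA_j))=cA_j$ then lift these to $(r_j^\ast)^{-1}A_j\leq X\leq r_j^\ast A_j$, so $R(X,A_j)^2\leq (r_j^\ast)^2\leq h_0$. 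Since $h\mapsto K(h,t)$ is decreasing on $[1,\infty)$ for $t\in(0,1]$, Theorem \ref{t1} yields
\[
\Phi(X)\geq K(h_0,t)\sum_{j=1}^n w_j\bigl(\Phi(X)\sharp_t\Phi(A_j)\bigr).
\]
The homogeneity $(\lambda Z)\sharp_t B=\lambda^{1-t}(Z\sharp_t B)$ means that rescaling via $Z:=K(h_0,t)^{-1/t}\Phi(X)$ converts this into $Z\geq T(Z)$, where $T(W):=\sum_j w_j(W\sharp_t\Phi(A_j))$ is the self-map whose unique positive fixed point is $P_t(\omega;\Phi(\mathbb{A}))$. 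Since $T$ is monotone and continuous (in fact contractive in the Thompson metric, by \cite{law2,lim1}), the iterates $T^k(Z)$ form a non-increasing sequence converging to the fixed point, hence $Z\geq P_t(\omega;\Phi(\mathbb{A}))$, i.e.\ $\Phi(X)\geq K(h_0,t)^{1/t}P_t(\omega;\Phi(\mathbb{A}))$. The minimization property $K(h_0,1/2)\leq K(h_0,t)$ noted after \eqref{2.3} then replaces the constant by $K(h_0,1/2)^{1/t}$, producing \eqref{3.6}.

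The main obstacle is the implication $Z\geq T(Z)\Rightarrow Z\geq P_t(\omega;\Phi(\mathbb{A}))$: while intuitive, it rests on the existence, uniqueness and attractivity of the positive fixed point of $T$, and should be invoked from the Lawson--Lim framework rather than reproved. A secondary technical point is to verify the monotonicity of $h\mapsto K(h,t)$ on $[1,\infty)$ for every $t\in(0,1]$, which follows directly from the closed form of the generalized Kantorovich constant recorded in the introduction.
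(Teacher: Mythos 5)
Your proof is correct and follows essentially the same route as the paper: both start from the fixed-point equation \eqref{3.2}, apply Theorem~\ref{t1} termwise with the uniform bound $R^{2}(X_t,A_j)\le h_0$, and then exploit the homogeneity $(\lambda Z)\sharp_t B=\lambda^{1-t}(Z\sharp_t B)$ together with the convergence of the iteration $W\mapsto\sum_j w_j\bigl(W\sharp_t\Phi(A_j)\bigr)$ to its unique fixed point $P_t(\omega;\Phi(\mathbb{A}))$. The two places where you deviate are in packaging rather than substance: for the bound $R^{2}(X_t,A_j)\le h_0$ the paper quotes the Thompson-metric nonexpansiveness $d(P_t(\omega;\mathbb{A}),A_j)\le \max_{i,k}d(A_i,A_k)$ from Lawson--Lim, whereas you rederive it from monotonicity and normalization of $P_t$, which is equally valid; and where the paper unrolls the iteration to accumulate the exponent $1+(1-t)+(1-t)^2+\cdots\to \frac{1}{t}$, you rescale once by $K^{-1/t}$ and observe that the rescaled operator is a supersolution of $T$, hence dominates the attracting fixed point --- a cleaner one-shot version of the identical computation (and it even yields the marginally sharper constant $K(h_0,t)^{1/t}$ before you relax it to $K(h_0,\tfrac12)^{1/t}$). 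The two auxiliary facts you flag --- monotonicity of $h\mapsto K(h,t)$ on $[1,\infty)$ for $t\in(0,1]$, and attractivity of the fixed point of $T$ from an arbitrary positive starting point (the Thompson-metric contraction of Lim--P\'alfia) --- are standard and are also used, implicitly or explicitly, in the paper's own argument, so there is no gap.
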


\begin{proof}
Let $t\in (0,1]$ and $X_t=P_t( \omega; \mathbb{A})$. Then
$X_t=\sum_{i=1}^n \omega_i(X_t\sharp_t A_i)$, by the operator equation \eqref{3.2}.
From inequality \eqref{2.4}, we have
\begin{equation}\label{3.6.1}
K\left(R^2(X_t,A_i), t\right)(\Phi(X_t)\sharp_t \Phi(A_i))\leq \Phi(X_t\sharp_t A_i).
\end{equation}
In the proof of proposition 3.5 of \cite{law2}, was shown that
\begin{equation*}
d(P_t( \omega; \mathbb{A}), A_j)\leq \max\limits_{1\leq i,j\leq n} d(A_i,A_j),
\end{equation*}
Where $d$ is the Thompson metric. This implies that
\begin{equation*}
R^2(X_t, A_j)=R^2(P_t( \omega; \mathbb{A}), A_j)\leq \max\limits_{1\leq i,j\leq n} R^2(A_i,A_j)=h_0.
\end{equation*}
Since the function $K(h, \frac{1}{2})=\frac{2h^\frac{1}{4}}{1+h^\frac{1}{2}}$ is decreasing for $h\geq1$, Thus
\begin{equation}\label{3.6.2}
K\left(h_0, \frac{1}{2}\right)\leq K\left(R^2(X_t,A_i), \frac{1}{2}\right)\leq K\left(R^2(X_t,A_i), t\right)
\end{equation}
Define
$f(X)=\sum_{i=1}^n \omega_i(X\sharp_t \Phi(A_i))$. Then $\lim_{n\rightarrow\infty}f^n(X)=P_t( \omega; \mathbb{A})$ for any $X>0$.
Let $\Phi$ be a unital positive linear map on $B(H)$. From relations \eqref{3.6.1} and \eqref{3.6.2}, we get
\begin{align*}
\Phi(X_t)&=\sum_{i=1}^n \omega_i\Phi(X_t\sharp_t A_i)\geq \sum_{i=1}^n \omega_i K\left(R^2(X_t,A_i), t\right)(\Phi(X_t)\sharp_t \Phi(A_i))\\
&\geq \sum_{i=1}^n \omega_i K\left(R^2(X_t,A_i), \frac{1}{2}\right)(\Phi(X_t)\sharp_t \Phi(A_i))\\
&\geq\sum_{i=1}^n \omega_i K\left(h_0, \frac{1}{2}\right)(\Phi(X_t)\sharp_t \Phi(A_i))=K\left(h_0, \frac{1}{2}\right)f(\Phi(X_t)).
\end{align*}
Since $f$ is an increasing function, we have
\begin{align*}
f\left(K\left(h_0, \frac{1}{2}\right)^{-1}\Phi(X_t)\right)&=\sum_{i=1}^n \omega_i \left(K\left(h_0, \frac{1}{2}\right)^{-1}\Phi(X_t)\sharp_t \Phi(A_i)\right)\\
&=K\left(h_0, \frac{1}{2}\right)^{t-1}\sum_{i=1}^n \omega_i (\Phi(X_t)\sharp_t \Phi(A_i))\\
&=K\left(h_0, \frac{1}{2}\right)^{t-1}f(\Phi(X_t))\geq f^2(\Phi(X_t)).
\end{align*}
Therefore
\begin{align*}
\Phi(X_t)\geq K\left(h_0, \frac{1}{2}\right)f(\Phi(X_t))\geq K\left(h_0, \frac{1}{2}\right)^{1+(1-t)}f^2(\Phi(X_t)).
\end{align*}
Consequently
\begin{align*}
\Phi(X_t)\geq K\left(h_0, \frac{1}{2}\right)f(\Phi(X_t))\geq K\left(h_0, \frac{1}{2}\right)^{1+(1-t)+(1-t)^2+\dots+(1-t)^{n-1}}f^n(\Phi(X_t)).
\end{align*}
This implies that
\begin{align*}
\Phi(P_t(\omega; \mathbb{A}))=\Phi(X_t)\geq K\left(h_0, \frac{1}{2}\right)^\frac{1}{t}\lim_{n\rightarrow \infty}f^n(\Phi(X_t))=K\left(h_0, \frac{1}{2}\right)^\frac{1}{t}P_t(\omega; \Phi(\mathbb{A})).
\end{align*}
\end{proof}

Let $X$ be a positive definite operator on a Hilbert space $H$.
Then $m=\lambda_{min}(X)\leq X\leq \lambda_{max}(X)=M$, where $\lambda_{min}(X)(\text{resp. }\lambda_{max}(X))$ is the minimum (resp. maximum) of the specrum of $X$.

\begin{theorem}\label{t4}
Let $\Phi$ be a unital positive linear map on $B(H)$ and $n\geq2$ be a positive integer. If $(A_1, \dots , A_n)\in \mathcal{P}^n$.
Then
\begin{equation}\label{3.7}
\Lambda( \omega;\Phi(\mathbb{A}))\geq\Phi(\Lambda( \omega; \mathbb{A}))\geq \frac{4\hbar}{(1+\hbar)^2}~\Lambda( \omega;\Phi(\mathbb{A})),
\end{equation}
where
$m_i=\lambda_{min}(A_i),~M_i=\lambda_{max}(A_i)$ and $\hbar= \max\limits_{1\leq i\leq n} \frac{M_i}{m_i}$.
\end{theorem}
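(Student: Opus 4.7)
The plan is to derive both inequalities in \eqref{3.7} by reducing everything to the weighted power means $P_t$ and then passing to the limit $t\to 0^+$ via the Lawson--Lim identity \eqref{3.3}. The left inequality is almost immediate: apply \eqref{3.5} to get $\Phi(P_t(\omega;\mathbb{A}))\leq P_t(\omega;\Phi(\mathbb{A}))$ for $t\in(0,1]$ and send $t\to 0^+$; by \eqref{3.3} the two sides converge in the strong operator topology to $\Phi(\Lambda(\omega;\mathbb{A}))$ and $\Lambda(\omega;\Phi(\mathbb{A}))$ respectively, using that $\Phi$ is bounded and hence SOT-continuous on bounded sets.

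For the right inequality the idea is to apply \eqref{3.5} to the inverted tuple $\mathbb{A}^{-1}=(A_1^{-1},\dots,A_n^{-1})$ and then trade $\Phi(A_i^{-1})$ for $\Phi(A_i)^{-1}$ at the cost of a single Kantorovich factor. Since $M_i/m_i\leq \hbar$ for every $i$, the complement \eqref{2.9.2} of Kadison's Schwarz inequality yields $\Phi(A_i^{-1})\leq K(\hbar,2)\,\Phi(A_i)^{-1}$ with $K(\hbar,2)=(1+\hbar)^2/(4\hbar)$, hence componentwise $\Phi(\mathbb{A}^{-1})\leq K(\hbar,2)\,\Phi(\mathbb{A})^{-1}$. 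Monotonicity and positive homogeneity of $P_t$, together with \eqref{3.5} applied to $\mathbb{A}^{-1}$, then chain into
\[
\Phi\bigl(P_t(\omega;\mathbb{A}^{-1})\bigr)\leq P_t\bigl(\omega;\Phi(\mathbb{A}^{-1})\bigr)\leq K(\hbar,2)\,P_t\bigl(\omega;\Phi(\mathbb{A})^{-1}\bigr).
\]
Inverting both ends (which reverses the order) and applying Kadison--Schwarz $\Phi(X)^{-1}\leq \Phi(X^{-1})$ with $X=P_t(\omega;\mathbb{A}^{-1})$, then using the definition $P_{-t}(\omega;\,\cdot\,)=P_t\bigl(\omega;(\,\cdot\,)^{-1}\bigr)^{-1}$, I would obtain
\[
\Phi\bigl(P_{-t}(\omega;\mathbb{A})\bigr)\geq \frac{4\hbar}{(1+\hbar)^2}\,P_{-t}\bigl(\omega;\Phi(\mathbb{A})\bigr).
\]
Letting $t\to 0^+$ and invoking \eqref{3.3} a second time delivers the right-hand inequality.

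The only technical wrinkle is the passage to the limit, and it is not really an obstacle: the convergence in \eqref{3.3} is in SOT, $\Phi$ is SOT-continuous on bounded sets, and operator inequalities between self-adjoint operators survive SOT limits. The genuine content is the algebraic assembly above. It is worth emphasising that the sharp constant $\tfrac{4\hbar}{(1+\hbar)^2}=K(\hbar,2)^{-1}$ comes from applying the Kantorovich complement \eqref{2.9.2} \emph{once}, to the two-sided bound $m_i\leq A_i\leq M_i$; it does not arise from iterating the $t$-dependent constant of Theorem~\ref{t3}, which would degenerate as $t\to 0^+$. This is why routing the argument through $\mathbb{A}^{-1}$ and $P_{-t}$ is the right move rather than applying Theorem~\ref{t3} directly.
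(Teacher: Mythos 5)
Your proposal is, in substance, the same proof as the paper's: the left inequality via \eqref{3.5} and the limit \eqref{3.3}, and the right inequality by passing to $\mathbb{A}^{-1}$, applying \eqref{3.5} there, trading $\Phi(A_i^{-1})$ for $\Phi(A_i)^{-1}$ once through the Kantorovich complement \eqref{2.9.2}, inverting, and letting $t\to 0^+$; your closing remark about why the constant must come from a single application of \eqref{2.9.2} rather than from iterating Theorem~\ref{t3} is correct and well taken. The one step where your justification is weaker than the paper's is the limit passage: a unital positive linear map on $B(H)$ is norm-bounded but need \emph{not} be SOT-continuous on bounded sets (singular states are a counterexample), so you cannot in general conclude $\Phi(P_{\pm t}(\omega;\mathbb{A}))\to\Phi(\Lambda(\omega;\mathbb{A}))$ from SOT convergence of $P_{\pm t}(\omega;\mathbb{A})$. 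The paper avoids this by invoking the monotonicity \eqref{3.4}, $P_{-t}(\omega;\mathbb{A})\leq\Lambda(\omega;\mathbb{A})\leq P_t(\omega;\mathbb{A})$, so that $\Phi(\Lambda(\omega;\mathbb{A}))$ is sandwiched by $\Phi(P_{\pm t}(\omega;\mathbb{A}))$ for every fixed $t$, and the only limits one needs are of $P_{\pm t}(\omega;\Phi(\mathbb{A}))$, where $\Phi$ is already outside. Inserting that sandwich repairs your argument without changing anything else.
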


\begin{proof}
From relations \eqref{3.3} and \eqref{3.5}, we get
\begin{equation}\label{3.8}
\Phi(\Lambda( \omega; \mathbb{A}))\leq\Phi(P_t( \omega; \mathbb{A}))\leq P_t(\omega;\Phi(\mathbb{A})).
\end{equation}
If $t\rightarrow 0$ in \eqref{3.8}, then
\begin{equation}\label{3.9}
\Phi(\Lambda( \omega; \mathbb{A}))\leq \Lambda(\omega;\Phi(\mathbb{A})).
\end{equation}

Using \eqref{2.9.2}, we obtain the following inequalities for $\mathbb{A}=(A_1,A_2,\dots,A_n)$
\begin{equation}\label{3.9.1}
\Phi(\mathbb{A})^{-1}\leq\Phi(\mathbb{A}^{-1})\leq  K(\hbar,2)\Phi(\mathbb{A})^{-1}=\Phi(K^{-1}(\hbar,2)\mathbb{A})^{-1}.
\end{equation}

Let $-1\leq t<0$, utilizing  \eqref{3.5} and \eqref{3.9.1}, we have
\begin{align}\label{3.10}
\Phi(P_t( \omega; \mathbb{A}))&=\Phi\left(P_{-t}( \omega; \mathbb{A}^{-1}\right)^{-1})\geq\left(\Phi(P_{-t}( \omega; \mathbb{A}^{-1}))\right)^{-1}\notag\\
&\geq\left(P_{-t}( \omega; \Phi(\mathbb{A}^{-1}))\right)^{-1}\geq\left(P_{-t}( \omega; (\Phi(K^{-1}(\hbar, 2)\mathbb{A)})^{-1})\right)^{-1}\notag\\
&\geq P_{t}( \omega; \Phi(K^{-1}(\hbar, 2)\mathbb{A)})=K^{-1}(\hbar, 2)P_{t}( \omega; \Phi(\mathbb{A)}).
\end{align}
From relation \eqref{3.3} and \eqref{3.10}, we obtain
\begin{equation}\label{3.11}
K^{-1}(\hbar, 2)P_{-t}( \omega; \Phi(\mathbb{A)})\leq \Phi(P_{-t}( \omega; \mathbb{A}))\leq\Phi(\Lambda( \omega; \mathbb{A}))\leq \Lambda(\omega;\Phi(\mathbb{A})).
\end{equation}
If $t\rightarrow 0$ in \eqref{3.11}, then
\begin{equation*}
K^{-1}(\hbar, 2)\Lambda(\omega;\Phi(\mathbb{A}))\leq\Phi(\Lambda( \omega; \mathbb{A}))\leq\Lambda(\omega;\Phi(\mathbb{A})).
\end{equation*}

\end{proof}


\section{The geometric mean due to Ando-Li-Mathias}\vskip 2mm
Arithmetic and harmonic means of $n$-operators can be defined, easily. But the geometric mean case gives a lot of trouble
because the product of operators is non-commutative.
Although several geometric means
of $n$-operators are defined, they do not have some important properties, for example, permutation
invariant or monotonicity.
Some of mathematicians have long been interested to extend the geometric mean of two operators to $n$-operators case.
Ando, Lim and Mathias in \cite{ando} suggested a good definition of the geometric mean for extending it to
raise the number of positive semi-defined matrices. It is defined by a symmetric method and has many good properties.
They listed ten properties that a geometric mean of $m$ matrices
should satisfy, and displayed that their mean possesses all of them.
Lawson and Lim in \cite{law3} have shown that $G$ has all the ten properties. Other ideas of geometric
mean with all the ten properties have been suggested in \cite{bha1,bin, izu}.
In \cite{yam} Yamazaki pointed out that definition of the geometric mean by Ando. Li and Mathias can
be extended to Hilbert space operators.

The geometric mean $G(A_{1},A_{2},\dots ,A_{n})$ of any $n$-tuple
of positive definite operators $\mathbb{A}=(A_1,\dots,A_n)\in\mathcal{P}^n$ is defined by induction.

(i) $G(A_{1},A_{2})$ =$A_{1}\sharp A_{2}$

(ii) Assume that the geometric mean any $(n-1)$-tuple of operators is defined. Let
\[
G ((A_j)_{j\neq i })=G(A_{1},A_{2},\dots,A_{i-1},A_{i+1},\dots,A_{n}),
\]
and let sequences $\{\mathbb{A}_{i}^{(r)}\}  _{r=1}^{\infty}$ be $\mathbb{A}_{i}^{(1)}= A_{i}$  and
$ \mathbb{A}_{i}^{(r+1)}=G((\mathbb{A}_{j}^{(r)})_{j\neq i }) $.
If there  exists  $ \lim_{r\rightarrow\infty}{\mathbb{A}_{i}^{(r)}} $, and it does not depend on $i$.
Hence the geometric mean of $n$-operators is defined by
\begin{equation}\label{4.1}
 \lim_{r\rightarrow\infty}{\mathbb{A}_{i}^{(r)}} = G((\mathbb{A}))=G(A_{1},A_{2},\dots,A_{n}) \text{ for } i= 1,\dots,n.
\end{equation}
In \cite{ando}, Ando et al. showed there exists this limit and \eqref{4.1} is uniformly convergence.

Furthermore, for $\mathbb{A}=(A_1,\dots,A_n)$ and $\mathbb{B}=(B_1,\dots,B_n)\in\mathcal{P}^n$, the following important inequality holds
\begin{equation}\label{4.2}
R(G(\mathbb{A}), G(\mathbb{B}))\leq \left(\prod_{i=1}^n R(A_i,B_i)\right)^\frac{1}{n}.
\end{equation}
Particular,
\begin{equation}\label{4.3}
R(\mathbb{A}_i^{(2)}, \mathbb{A}_k^{(2)})=R(G((A_j)_{j\neq i}), G((A_j)_{j\neq k}))\leq R(A_i, A_k)^\frac{1}{n-1}
\end{equation}
holds.

Yamazaki in \cite{yam} also obtained a converse of the arithmetic-geometric mean inequality
of $n$-operators via Kantorovich constant. Soon after, Fujii el al. \cite{fuj} also proved a stronger
reverse inequality of the weighted arithmetic and geometric means due to Lawson and
Lim of $n$-operators by the Kantorovich inequality.

Let $\mathbb{A}=(A_1,\dots,A_n)\in\mathcal{P}^n$, $\Phi$ be a unital positive linear map on $B(H)$. Then $\Phi(A_{1}),\Phi(A_{2}),\dots,\Phi(A_{n})$ are $n$
positive definite operators in $\mathcal{P}$.
We consider the geometric mean $G(\Phi(\mathbb{A}))$, where $\Phi(\mathbb{A})=(\Phi(A_{1}),\Phi(A_{2}),\dots,\Phi(A_{n}))$,
as follows:
\begin{align*}
\Phi^{(1)}(\mathbb{A}_{i})&=\Phi(A_i) \text{ for } i=1,\dots,n,
\end{align*}
and for  $r\geq 1,  ~  i=1,\dots,n$
\begin{align*}
\Phi^{(r+1)}(\mathbb{A}_{i})&=G((\Phi^{(r)}(\mathbb{A}_{i}))_{j\neq i})\\
&=G(\Phi^{(r)}(\mathbb{A}_{1}),\dots,\Phi^{(r)}(\mathbb{A}_{i-1}), \Phi^{(r)}(\mathbb{A}_{i+1}),\dots, \Phi^{(r)}(\mathbb{A}_{n})).
\end{align*}
Finally,
\begin{align*}
G(\Phi(\mathbb{A}))=G((\Phi(A_{1}),\Phi(A_{2}),\dots,\Phi(A_{n}))) =\lim _{r\rightarrow\infty} \Phi^{(r)}(\mathbb{A}_{i}).
\end{align*}

\begin{theorem}\label{t5}
Let $n\geq2$ be a positive integer, and $(A_1, \dots , A_n)\in \mathcal{P}^n$.
Then
\begin{align}\label{4.4}
\Phi(G(\mathbb{A}))\geq\left(\frac{2h_1^{\frac{1}{2}}}{1+h_1}\right)^{n-1}G(\Phi(\mathbb{A}))
\end{align}
\end{theorem}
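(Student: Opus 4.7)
The plan is to prove Theorem~\ref{t5} by induction on $n\geq 2$. The base case $n=2$ is just \eqref{2.9}, combined with the observation that $h\mapsto \frac{2h^{1/2}}{1+h}$ is decreasing on $[1,\infty)$, so $R(A_1,A_2)$ may be replaced by $h_1=\max_{i,j}R(A_i,A_j)$ to produce the claimed factor with exponent $n-1=1$.

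For the inductive step, I assume the inequality holds for every $(n-1)$-tuple with exponent $n-2$, and I work with the defining Ando-Li-Mathias iterations $\mathbb{A}_i^{(r)}$ and $\Phi^{(r)}(\mathbb{A}_i)$, which converge respectively to $G(\mathbb{A})$ and $G(\Phi(\mathbb{A}))$ by \eqref{4.1}. I will prove by a secondary induction on $r$ that $\Phi(\mathbb{A}_i^{(r)}) \geq D_r\,\Phi^{(r)}(\mathbb{A}_i)$ for every $i$, starting from $D_1 = 1$. The step from $r$ to $r+1$ combines three ingredients: the inductive hypothesis on $n-1$ applied to $(\mathbb{A}_j^{(r)})_{j\neq i}$, contributing a factor $\bigl(\tfrac{2(h^{(r)})^{1/2}}{1+h^{(r)}}\bigr)^{n-2}$ in which $h^{(r)}\geq \max_{j,k}R(\mathbb{A}_j^{(r)},\mathbb{A}_k^{(r)})$; the monotonicity and positive homogeneity of the $(n-1)$-variable geometric mean, which propagate the existing $D_r$ through $G_{n-1}$; and the contraction estimate \eqref{4.3}, iterated to give $h^{(r)}\leq h_1^{1/(n-1)^{r-1}}$ and hence $h^{(r)}\to 1$.

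Passing to the limit $r\to\infty$ by continuity of $\Phi$ and uniform convergence in \eqref{4.1} yields
\[
\Phi(G(\mathbb{A})) \geq D_\infty\, G(\Phi(\mathbb{A})), \qquad D_\infty = \prod_{r=1}^{\infty}\Bigl(\tfrac{2(h^{(r)})^{1/2}}{1+h^{(r)}}\Bigr)^{n-2}.
\]
It then remains to verify $D_\infty \geq \bigl(\tfrac{2h_1^{1/2}}{1+h_1}\bigr)^{n-1}$. Setting $L^{(r)}=\log h^{(r)}$ and rewriting each factor as $\operatorname{sech}(L^{(r)}/2)$, this reduces to the scalar inequality
\[
(n-2)\sum_{r=1}^{\infty} \log\cosh\bigl(L^{(r)}/2\bigr) \leq (n-1)\log\cosh(L_1/2),
\]
which, given the geometric bound $L^{(r)}\leq L_1/(n-1)^{r-1}$, follows from elementary estimates on $\log\cosh$ (for instance $\log\cosh(x)\leq x$ in the large-$x$ regime, supplemented by $\log\cosh(x)\approx x^2/2$ near zero).

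The main obstacle is exactly this final scalar comparison: establishing that the infinite product of iteration-factors admits the neat closed-form lower bound with exponent precisely $n-1$. The rapid geometric contraction of the $h^{(r)}$ provided by \eqref{4.3} is essential; without it the product would not even converge to a positive quantity, let alone collapse to a single $n-1$-fold power of the basic two-operator factor.
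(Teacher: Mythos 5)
Your architecture is the same as the paper's: induction on $n$ with base case \eqref{2.9}, a secondary iteration in $r$ accumulating a product $D_r=\prod_{s<r}K_s^{\,n-2}$ of per-step constants, the contraction $h^{(r)}\le h_1^{1/(n-1)^{r-1}}$ from \eqref{4.3}, and a passage to the limit. The one place where your proposal does not close is exactly the step you flag as the main obstacle, and the estimates you propose for it do not work. You need
$\sum_{r\ge1}\log\cosh\bigl(L_1/(2(n-1)^{r-1})\bigr)\le\frac{n-1}{n-2}\log\cosh(L_1/2)$,
and the geometric series $\sum_{r\ge1}(n-1)^{-(r-1)}=\frac{n-1}{n-2}$ leaves no slack: the bound must be proved term by term with the exact weight $(n-1)^{-(r-1)}$. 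The estimate $\log\cosh(x)\le x$ goes the wrong way for this purpose — it bounds the tail by $\frac{1}{n-2}\,\frac{L_1}{2}$, which exceeds the required $\frac{1}{n-2}\log\cosh(L_1/2)$ since $x>\log\cosh(x)$ for $x>0$ — and the quadratic approximation near $0$ is of no help for the first tail term, whose argument $L_1/(2(n-1))$ need not be small.

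The inequality that closes the gap is $\log\cosh(\alpha x)\le\alpha\log\cosh(x)$ for $0<\alpha\le1$, which is immediate from convexity of $\log\cosh$ together with $\log\cosh(0)=0$; applied with $\alpha=(n-1)^{-(r-1)}$ it gives the tight term-by-term bound and hence $D_\infty\ge K_1^{(n-2)\cdot\frac{n-1}{n-2}}=K_1^{\,n-1}$. This is precisely the paper's inequality \eqref{4.5}, stated there in the equivalent form $K(h^{\alpha},\tfrac12)\ge K(h,\tfrac12)^{\alpha}$ and derived from concavity of $t\mapsto t^{\alpha}$ via $\frac{1+t^{\alpha}}{2}\le\bigl(\frac{1+t}{2}\bigr)^{\alpha}$. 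With that single scalar lemma substituted for your "elementary estimates," your argument coincides with the paper's proof.
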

where $h_1=\max\limits_{1\leq i,j\leq n} R(A_i, A_j)$.
\begin{proof}

First, for $r\geq1$, we put $h_r=\max\limits_{1\leq i,j\leq n} R(\mathbb{A}_i^{(r)}, \mathbb{A}_j^{(r)})$ and $K_{r}=K(h^2_{r},\frac{1}{2})$.
By \eqref{4.3}, we have
\[
 1\leq h_{r} \leq h_{r-1}^{\frac{1}{n-1}} \leq\dots\leq h_{1}^{(\frac{1}{n-1})^{r}}.
\]
Concavity the function $ f(t)=t^{\alpha} $ for $0< \alpha\leq 1$  implies that
\begin{equation*}
\dfrac {1+t^{\alpha}}{2}\leq \left(\dfrac{1+t}{2}\right)^{\alpha},
\text{ or } \dfrac{2t^{\frac{\alpha}{2}}}{1+t^{\alpha}}\geq\left(\dfrac{2t^\frac{1}{2}}{1+t}\right)^{\alpha}.
\end{equation*}
Since the function $K(t)=\dfrac{2t^\frac{1}{2}}{1+t}$ is a decreasing function for $t\geq 1$, thus
\begin{equation}\label{4.5}
K_{r}=\dfrac{2h_{r}^\frac{1}{2}}{1+h_{r}}\geq\dfrac{2h_{1}^{\frac{1}{2}(\frac{1}{n-1})^r}}{1+h_{1}^{(\frac{1}{n-1})^r}}
\geq\left(\dfrac{2h_{1}^\frac{1}{2}}{1+h_{1}}\right)^{(\frac{1}{n-1})^r}=K_{1}^{(\frac{1}{n-1})^r}.
\end{equation}

Now, we will prove \eqref{4.4} by induction on $n$.
For $n=2$, from inequality \eqref{2.9}, we have
\begin{equation*}
\Phi(A_{1}\sharp A_{2}) \geq K_{1}(\Phi(A_{1}) \sharp \Phi( A_{2})).
\end{equation*}
For $n=3$. A simple calculation shows that
\begin{align*}
\Phi\Big(G(A_{1}\sharp A_{3},A_{1}\sharp A_{2})\Big)&=
\Phi\Big((A_{1}\sharp A_{3})\sharp(A_{1}\sharp A_{2} )\Big)
\geq K_{2}\Big[\Phi\Big(A_{1}\sharp A_{3}\Big)\sharp\Phi\Big(A_{1}\sharp A_{2}\Big)\Big]\\
&\geq K_{2}\Big[K_{1}\Big(\Phi(A_{1})\sharp \Phi(A_{3})\Big)\sharp K_{1}\Big(\Phi(A_{1})\sharp\Phi(A_{2})\Big)\Big]\\
&=K_{2}K_{1}\Big[\Big(\Phi(A_{1})\sharp\Phi(A_{3})\Big)\sharp\Big(\Phi(A_{1})\sharp(\Phi(A_{2})\Big)\Big].\\
&=K_{2}K_{1} G\Big(\Phi(A_{1})\sharp\Phi(A_{3}),\Phi(A_{1})\sharp\Phi(A_{2})\Big).
\end{align*}
Therefore $\Phi(A_{1}^{(3)})\geq K_{2}K_{1} \Phi^{(3)}(A_{1})$.
Hence for $r\geq1$, we obtain
\begin{equation*}
\Phi(A_{i}^{(r)}) \geq (K_{r-1}K_{r-2}\dots K_{1}) \Phi^{(r)}(A_{i}), \text{ for } i=1,2,3.
\end{equation*}

Assume that Theorem \ref{t2} holds for $n-1$. We prove the case $n$.
From the induction hypothesis, we have
\begin{align*}
\Phi(A_i^{(r)})&=\Phi(G ((A_j^{(r-1)})_{j\neq i }))=\Phi\left(G\big(A_{1}^{(r-1)},A_{2}^{(r-1)},\dots,A_{i+1}^{(r-1)},\dots,A_{n}^{(r-1)}\big)\right)\\\
&\geq K_{r-1}^{n-2}G\big(\Phi(A_1^{(r-1)}),\dots,\Phi(A_{i+1}^{(r-1)}),\dots,\Phi(A_n^{(r-1)})\big)\\
&\geq K_{r-1}^{n-2}G\big(K_{r-2}^{n-2}\Phi(A_1^{(r-1)}),\dots,K_{r-2}^{n-2}\Phi(A_{i+1}^{(r-1)}),\dots,K_{r-2}^{n-2}\Phi(A_n^{(r-1)})\big)\\
&\geq K_{r-1}^{n-2}K_{r-2}^{n-2}G\big(\Phi(A_1^{(r-1)}),\dots,\Phi(A_{i+1}^{(r-1)}),\dots,\Phi(A_n^{(r-1)})\big)\\
&= K_{r-1}^{n-2}K_{r-2}^{n-2}G\big(\Phi(G(A_1^{(r-2)}),\dots,\Phi(G(A_{i+1}^{(r-2)}),\dots,\Phi(G(A_n^{(r-2)})\big)\\
&\vdots\\
&\geq(K_{r-1}K_{r-2}\dots K_{1})^{n-2}G\big(\Phi(A_1),\dots,\Phi(A_{i+1}),\dots,\Phi(A_n)\big)\\
&=(K_{r-1}K_{r-2}\dots K_{1})^{n-2}G(\Phi(A_j)_{j\neq i}),
\end{align*}
therefore, for positive integer $i=1,2,\dots,n$, we deduce
\begin{equation}\label{4.6}
\Phi(A_i^{(r)})\geq(K_{r-1}K_{r-2}\dots K_{1})^{n-2}G(\Phi(A_j)_{j\neq i}).
\end{equation}
From \eqref{4.5}, we get
$$K_{r-1}K_{r-2}\dots K_{1}\geq K_{1}K_{1}^{\frac{1}{n-1}}\dots K_{1}^{(\frac{1}{n-1})^{r-1}}
=K_{1}^{1+\frac{1}{n-1}+\dots+(\frac{1}{n-1})^{r-1}}.$$
Consequently
\begin{equation}\label{4.7}
\liminf_{r\rightarrow \infty}K_{r-1}K_{r-2}\dots,K_{1}\geq K_{1}^{\frac{n-1}{n-2}},
\end{equation}
since $K_{1}^{1+\frac{1}{n-1}+\dots+(\frac{1}{n-1})^{r-1}}\rightarrow K_{1}^{\frac{n-1}{n-2}}$.
We know that
\begin{equation}\label{4.8}
\lim_{r\rightarrow\infty}\Phi(A_{i}^{(r)})=\Phi(\lim_{r\rightarrow\infty}A_{i}^{(r)})=\Phi(G(A_1, A_2,\dots,A_n))
\end{equation}
and
\begin{align}\label{4.9}
\lim _{r\rightarrow\infty} \Phi^{(r)}(A_{i})=G(\Phi(A_{1}),\Phi(A_{2}),\dots,\Phi(A_{n})).
\end{align}
From inequalities \eqref{4.6}, \eqref{4.7}, \eqref{4.8} and \eqref{4.9}, we obtain the desired inequality \eqref{4.4}.

\end{proof}

\begin{corollary}\label{c2}
Let $n\geq2$ be a positive integer and $(A_1, \dots , A_n)$ be a $n$-tuple in $\mathcal{P}^n$.
If $0<m_iI\leq A_i\leq M_iI$ for some scalers $0<m_i<M_i~(i=1,2,\dots,n)$.
Then
\begin{equation}\label{4.10}
\left(\frac{2\sqrt{M_0}}{1+M_0}\right)^{n-1}\left(\prod_{j=1}^{n}\langle A_{j}x,x\rangle\right)^{\frac{1}{n}}
\leq\langle G (A_{1},\dots,A_{n})x,x\rangle\leq \left(\prod_{j=1}^{n}\langle A_{j}x,x\rangle\right)^{\frac{1}{n}},
\end{equation}
where $M_0=\max\limits_{1\leq i,j\leq n}\{\frac{M_i}{m_j}\}$.
\end{corollary}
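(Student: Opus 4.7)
The plan is to obtain both bounds in \eqref{4.10} by specializing Theorem~\ref{t5} to the vector state $\Phi(X)=\langle Xx,x\rangle$ for a fixed unit vector $x\in H$, and then exploiting the fact that on positive scalars the Ando-Li-Mathias geometric mean collapses to the ordinary geometric mean $G(a_1,\dots,a_n)=(a_1\cdots a_n)^{1/n}$.

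First I would verify that $\Phi(X)=\langle Xx,x\rangle$ is a unital positive linear map from $B(H)$ to $\mathbb{C}$, so that Theorem~\ref{t5} applies with this choice. Since $\Phi(A_j)=\langle A_jx,x\rangle$ are positive scalars, the Ando-Li-Mathias iteration in the scalar case (starting from $a\sharp b=\sqrt{ab}$, then iterating symmetrically) gives $G(\Phi(A_1),\dots,\Phi(A_n))=\bigl(\prod_{j=1}^n\langle A_jx,x\rangle\bigr)^{1/n}$. The upper bound in \eqref{4.10} then comes from $\Phi(G(\mathbb{A}))\leq G(\Phi(\mathbb{A}))$, which one proves by induction on the Ando-Li-Mathias iteration using the two-variable inequality $\Phi(A\sharp B)\leq\Phi(A)\sharp\Phi(B)$ from \eqref{2.1}, combined with the continuity of $\Phi$ and the uniform convergence of the iteration \eqref{4.1}.

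For the lower bound, applying Theorem~\ref{t5} to $\Phi$ yields
\[
\langle G(\mathbb{A})x,x\rangle=\Phi(G(\mathbb{A}))\geq\left(\frac{2h_1^{1/2}}{1+h_1}\right)^{n-1}\left(\prod_{j=1}^n\langle A_jx,x\rangle\right)^{1/n},
\]
where $h_1=\max_{1\leq i,j\leq n}R(A_i,A_j)$. I then estimate $h_1$ in terms of $M_0=\max_{i,j}M_i/m_j$: from $m_iI\leq A_i$ and $A_j\leq M_jI$ we deduce $A_j\leq (M_j/m_i)\,A_i$, hence $r(A_i^{-1}A_j)\leq M_j/m_i\leq M_0$; by symmetry $R(A_i,A_j)\leq M_0$, so $h_1\leq M_0$. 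Since the function $t\mapsto\frac{2\sqrt{t}}{1+t}$ is decreasing for $t\geq 1$ (noted in the discussion following \eqref{2.3}), this gives $\frac{2h_1^{1/2}}{1+h_1}\geq\frac{2\sqrt{M_0}}{1+M_0}$, and substituting finishes \eqref{4.10}.

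The main obstacle is the careful bookkeeping behind $\Phi(G(\mathbb{A}))\leq G(\Phi(\mathbb{A}))$ under the Ando-Li-Mathias recursive definition; once that (routine but somewhat tedious) induction on the tuples $\mathbb{A}_i^{(r)}$ is carried out and passed to the limit via the uniform convergence stated after \eqref{4.1}, everything else reduces to the scalar identification of $G$ with the $n$-th root of the product and the monotonicity of $t\mapsto 2\sqrt{t}/(1+t)$.
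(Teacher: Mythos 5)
Your proposal is correct and follows essentially the same route as the paper: apply Theorem~\ref{t5} to the vector state $\Phi(X)=\langle Xx,x\rangle$, bound $h_1\leq M_0$ via $0<m_iI\leq A_i\leq M_iI$, use the monotonicity of $t\mapsto 2\sqrt{t}/(1+t)$ for $t\geq1$, and get the upper bound from the iterated version of $\Phi(A\sharp B)\leq\Phi(A)\sharp\Phi(B)$, which the paper likewise treats as routine. The only cosmetic difference is that you make explicit the scalar identification $G(a_1,\dots,a_n)=(a_1\cdots a_n)^{1/n}$, which the paper leaves implicit.
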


\begin{proof}
Let $(A_1, \dots , A_n)\in \mathcal{P}^n$. The relation $0<m_iI\leq A_i\leq M_iI$ implies that
\begin{align*}
R(A_i,A_j)=\max\{r(A_i^{-1}A_j), r(A_j^{-1}A_i)\}\leq \max\left\{\frac{M_j}{m_i}, \frac{M_i}{m_j}\right\}.
\end{align*}
Therefore
\[
h_0=\max_{1\leq i,j\leq n} R(A_i,A_j)\leq \max_{1\leq i,j\leq n}\left\{\frac{M_i}{m_j}\right\}=M_0.
\]
This implies that
\begin{equation}\label{4.11}
K(h^2_0,\frac{1}{2})\geq K(M^2_0, \frac{1}{2})=\frac{2\sqrt{M_0}}{1+M_0}.
\end{equation}

It can easily be verified by the iteration
argument from the two variable case that the following inequality holds
\begin{equation}\label{4.12}
\Phi(G(A_1, A_2,\dots,A_n))\leq G(\Phi(A_1), \Phi(A_2),\dots,\Phi(A_n)).
\end{equation}

Applying \eqref{4.4} and \eqref{4.12} to the positive linear functional $\Phi(A)=\langle Ax,x\rangle$, where $x$ is a unit vector in $H$,
we get the inequalities in \eqref{4.10} with Kantorovich constant $K(h^2_0,\frac{1}{2})$. The desired inequalities in \eqref{4.10} is obtained by \eqref{4.11}.
\end{proof}

In particular, if $0<mI\leq A_i\leq MI$, then $M_0=\frac{M}{m}$ and the relation \eqref{4.10} becomes
\begin{equation*}
\left(\frac{2\sqrt{mM}}{m+M}\right)^{n-1}\left(\prod_{j=1}^{n}\langle A_{j}x,x\rangle\right)^{\frac{1}{n}}
\leq\langle G (A_{1},\dots,A_{n})x,x\rangle\leq \left(\prod_{j=1}^{n}\langle A_{j}x,x\rangle\right)^{\frac{1}{n}}.
\end{equation*}

From inequalities \eqref{4.4} for $\Phi(A)=\langle Ax,x\rangle$ and as regards for positive linear operators
 $\|A\|=\sup\{\langle Ax,x\rangle :~\|x\|\leq 1\}$,
we give the following result:

\begin{corollary}\label{c3}
Let $n\geq2$ be a positive integer, $(A_1, \dots , A_n)\in \mathcal{P}^n$ and let $0<m_iI\leq A_i\leq M_iI$ for $i=1,2,\dots,n$ and for some scalers $0<m_i<M_i$.
Then
\begin{equation}\label{2.20}
\left(\frac{2\sqrt{M_0}}{1+M_0}\right)^{n-1}\prod_{j=1}^{n}\| A_{j}\|^{\frac{1}{n}}
\leq\| G (A_{1},\dots,A_{n})\|\leq \prod_{j=1}^{n}\| A_{j}\|^{\frac{1}{n}},
\end{equation}
where $M_0=\max\limits_{1\leq i,j\leq n}\{\frac{M_i}{m_j}\}$.
\end{corollary}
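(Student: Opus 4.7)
My plan is to mimic the proof of Corollary~\ref{c2} verbatim, replacing evaluation at a single unit vector by the operation of taking a supremum, and using the identity $\|A\|=\sup_{\|x\|\leq 1}\langle Ax,x\rangle$, which is valid for any positive operator $A\in B(H)$. The whole argument is driven by the pointwise scalar inequality already established inside the proof of Corollary~\ref{c2}.

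Concretely, for each unit vector $x\in H$ the functional $\Phi_x(A)=\langle Ax,x\rangle$ is unital and positive, so I would apply Theorem~\ref{t5} (inequality~\eqref{4.4}) and the companion upper bound~\eqref{4.12} to $\Phi_x$. Since the geometric mean $G$ reduces to the ordinary geometric mean $(a_1\cdots a_n)^{1/n}$ when its arguments are positive scalars, this produces, for every $\|x\|=1$,
\begin{equation*}
\left(\frac{2h_1^{1/2}}{1+h_1}\right)^{n-1}\prod_{j=1}^n\langle A_jx,x\rangle^{1/n}\;\leq\;\langle G(A_1,\dots,A_n)x,x\rangle\;\leq\;\prod_{j=1}^n\langle A_jx,x\rangle^{1/n}.
\end{equation*}
I would then, exactly as in the proof of Corollary~\ref{c2}, use the bound $h_1\leq M_0$ together with monotonicity of $t\mapsto \frac{2t^{1/2}}{1+t}$ on $[1,\infty)$ to replace the Kantorovich-type constant by the smaller quantity $\bigl(\frac{2\sqrt{M_0}}{1+M_0}\bigr)^{n-1}$.

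The last step is to pass to the supremum over $\|x\|\leq 1$. The middle term becomes $\|G(A_1,\dots,A_n)\|$ because $G(A_1,\dots,A_n)$ is positive. The outer terms both involve $\sup_{\|x\|\leq 1}\prod_{j=1}^n\langle A_jx,x\rangle^{1/n}$, which is bounded above by $\prod_{j=1}^n\|A_j\|^{1/n}$. For the upper half of~\eqref{2.20} this is exactly what one needs, and the inequality $\|G(A_1,\dots,A_n)\|\leq \prod_j\|A_j\|^{1/n}$ drops out immediately.

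I expect the genuinely delicate point to be the lower half of~\eqref{2.20}: there one wants a bound going in the opposite direction, namely $\sup_x\prod_j\langle A_jx,x\rangle^{1/n}\geq \prod_j\|A_j\|^{1/n}$, and this requires identifying $\sup_{\|x\|\leq 1}\prod_j\langle A_jx,x\rangle^{1/n}$ with $\prod_j\|A_j\|^{1/n}$, i.e.\ exhibiting (or at least approximating) a single unit vector which nearly saturates $\langle A_jx,x\rangle=\|A_j\|$ simultaneously for every $j$. This is the step that the phrase \emph{``as regards for positive linear operators $\|A\|=\sup\{\langle Ax,x\rangle:\|x\|\leq 1\}$''} in the hint is designed to cover, and it is where any technical care is required.
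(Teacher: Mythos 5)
Your strategy is the same one the paper itself gestures at (the paper offers no written proof of Corollary~\ref{c3} beyond the one-sentence remark preceding it: apply Corollary~\ref{c2} and use $\|A\|=\sup_{\|x\|\leq1}\langle Ax,x\rangle$), and your treatment of the upper half of \eqref{2.20} is fine. But you have correctly located, and then left open, the step on which the entire lower half rests: replacing $\sup_{\|x\|\leq1}\prod_j\langle A_jx,x\rangle^{1/n}$ by $\prod_j\|A_j\|^{1/n}$. This is not a technicality that further care can repair: in general no unit vector simultaneously (even approximately) norms all the $A_j$, the two quantities genuinely differ, and in fact the lower inequality in \eqref{2.20} is false as stated. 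Take $n=2$, $A_1=\mathrm{diag}(K,1)$, $A_2=\mathrm{diag}(1,K)$ with $K>1$, so that $m_1=m_2=1$, $M_1=M_2=K$ and $M_0=K$. Since $A_1$ and $A_2$ commute, $G(A_1,A_2)=A_1\sharp A_2=(A_1A_2)^{1/2}=\sqrt{K}\,I$, hence $\|G(A_1,A_2)\|=\sqrt{K}$, whereas the claimed lower bound is
\begin{equation*}
\frac{2\sqrt{M_0}}{1+M_0}\left(\|A_1\|\,\|A_2\|\right)^{1/2}=\frac{2\sqrt{K}}{1+K}\cdot K=\frac{2K^{3/2}}{1+K}>\sqrt{K}
\quad\text{for every }K>1.
\end{equation*}
So the step you flagged is exactly where the argument --- and the corollary itself --- breaks down.

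What the supremum argument actually yields is only
\begin{equation*}
\left(\frac{2\sqrt{M_0}}{1+M_0}\right)^{n-1}\sup_{\|x\|\leq1}\left(\prod_{j=1}^{n}\langle A_jx,x\rangle\right)^{\frac{1}{n}}\leq\|G(A_1,\dots,A_n)\|\leq\prod_{j=1}^{n}\|A_j\|^{\frac{1}{n}},
\end{equation*}
and the supremum on the left cannot be pulled inside the product without an additional hypothesis (for instance, that the $A_j$ admit a common norming vector, as happens when they are scalar multiples of one another). Your honesty in isolating the delicate point is to your credit, but the proposal as written does not prove \eqref{2.20}, and no completion along these lines can, because the statement being proved is not true in the stated generality.
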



\end{document}